\documentclass[11pt, reqno]{amsart}
\usepackage{amsmath, amssymb}

\usepackage{hyperref}
\usepackage{color}
\usepackage{bm, mathrsfs, enumitem}
\usepackage[normalem]{ulem}
\usepackage{graphicx}
\usepackage{tikz}
\usepackage[all]{xy}
\allowdisplaybreaks %数式環境で改ページを許す

\newtheorem{theorem}{Theorem}[section]
\newtheorem{lemma}[theorem]{Lemma}
\newtheorem{proposition}[theorem]{Proposition}
\newtheorem{corollary}[theorem]{Corollary}

\theoremstyle{definition}

\theoremstyle{remark}

\numberwithin{equation}{section}

\newcommand{\Z}{\mathbb{Z}}
\newcommand{\Q}{\mathbb{Q}}
\newcommand{\R}{\mathbb{R}}

\newcommand{\ab}{\mathrm{ab}}
\newcommand{\cd}{\mathrm{cd}}
\newcommand{\defy}{\mathrm{def}}

\newcommand{\BS}{\mathit{BS}}
\newcommand{\FP}{\mathit{FP}}

\newcommand{\Ker}{\operatorname{Ker}}

\newcommand{\Int}{\operatorname{Int}}
\newcommand{\Cone}{\operatorname{Cone}}
\newcommand{\Hom}{\mathrm{Hom}}

\newcommand{\ang}[1]{\langle #1 \rangle}

\title[]{On the BNSR invariants of link groups}

\author{Yuta Nozaki}
\address{
Department of Mathematics, Faculty of Science, Hokkaido University \\
Sapporo 060-0810 \\
Japan\vspace{-0.6em}}
\address{
International Institute for Sustainability with Knotted Chiral Meta Matter (WPI-SKCM$^2$), Hiroshima University \\
1-3-1 Kagamiyama, Higashi-Hiroshima, Hiroshima 739-8531 \\
Japan}
\email{nozaki@math.sci.hokudai.ac.jp}

\subjclass[2020]{Primary 57M07, 57K10, Secondary 57K45, 20J05} %20F65
\keywords{Bieri-Neumann-Strebel invariant, finiteness property, Thurston norm, twist-spun knot}

\begin{document}
\maketitle

\begin{abstract}
For a finitely generated group $G$, the Bieri-Neumann-Strebel-Renz (BNSR) invariants are subsets of the character sphere of $G$ that govern the finiteness properties of normal subgroups containing the commutator subgroup. 
We investigate the BNSR invariants of link groups and $2$-knot groups. 
In particular, for a link $L$ with at least two components, we prove that the commutator subgroup of the link group is finitely generated if and only if $L$ is a Hopf link. 
Moreover, we show that there exists a ribbon $2$-knot whose knot group has a non-symmetric BNS invariant.
\end{abstract}

%\setcounter{tocdepth}{1}
%\tableofcontents

%%%%%%%%%
\section{Introduction}
\label{sec:Introduction}

The study of finiteness properties of groups has been an attractive theme in group theory. 
A landmark development in this field was the introduction of the BNS invariant $\Sigma^1(G)$ for a finitely generated group $G$ by Bieri, Neumann, and Strebel~\cite{BNS87} (see \cite[Theorem~C1.13]{Str13}).
Let $S(G)$ denote the set $(\Hom(G, \R)\setminus \{0\})/\R_{>0}$ of non-trivial characters $G \to \R$ up to multiplication by positive scalars.
The BNS invariant is defined as a subset of the character sphere $S(G)$ and provides a geometric criterion to determine which kernels of characters $\chi\colon G \to \R$ are finitely generated. 
In particular, a normal subgroup $N$ containing the commutator subgroup $[G, G]$ is finitely generated if and only if the characters vanishing on $N$ lie in $\Sigma^1(G)$.
Building upon this framework, Bieri and Renz~\cite{BiRe88} generalized the original BNS invariant to higher dimensions (see also \cite{Ren88}).
These are categorized into two types: homological BNSR invariants $\Sigma^m(G; R)$ and homotopical BNSR invariants $\Sigma^m(G)$ for positive integers $m$, where $R$ is a commutative ring with unity.
The $\Sigma^m(G; R)$ (resp.~$\Sigma^m(G)$) characterizes whether the kernel of a character satisfies the homological finiteness property $\FP_m$ over $R$ (resp.~homotopical finiteness property $F_m$).
We refer the reader to Geoghegan's book~\cite[Chapter~8]{Geo08} for finiteness properties and to a comprehensive survey \cite{Str13} by Strebel for details of the BNS invariant.

In the realm of low-dimensional topology, the BNS invariants have emerged as a fundamental bridge between the algebraic structure of the fundamental group and the geometric properties of the underlying manifold.
See, for instance, Kochloukova~\cite{Koc06JPAA}, Kielak~\cite{Kie20}, Friedl and Vidussi~\cite{FrVi23}.
As another remarkable application, Ershov, He~\cite{ErHe18} computed the BNS invariant of the Torelli subgroup of the mapping class group of a certain surface.
Their proof is based on Brown's characterization~\cite{Bro87} of $\Sigma^1$ in terms of actions on $\R$-trees, and they conclude that the Johnson kernel is finitely generated.
After that, this result was generalized by Church, Ershov, and Putman~\cite{CEP22}.

In this paper, we focus on the BNSR invariants of link groups.
Let $L$ be a $d$-link in the $(d+2)$-sphere $S^{d+2}$, that is, a (locally flat) submanifold of $S^{d+2}$ which is homeomorphic to a disjoint union of $d$-spheres.
We simply call it a link when $d=1$, and the $L$ is called a $d$-knot if it is connected.
We write $E(L)$ for the exterior of $L$ and $G(L)$ for the fundamental group of $E(L)$.

It is shown in \cite[Corollary~F]{BNS87} that, for a $3$-manifold $M$, $\Sigma^1(\pi_1(M))$ is symmetric, i.e., invariant under the multiplication by $-1$.
In particular, if $H_1(M; \Z)\cong \Z$, we conclude that $\Sigma^1(\pi_1(M))$ is either empty or $S(\pi_1(M))=\{[\ab], [-\ab]\}$, where $\ab\colon \pi_1(M)\to \Z$ is the abelianization.

\begin{proposition}[folklore]
For a knot $K$ in $S^3$ and a positive integer $m$, $\Sigma^m(G(K))$ is $S(G(K))$ if $K$ is fibered, empty if not.
The same holds for $\Sigma^m(G(K); R)$.
\end{proposition}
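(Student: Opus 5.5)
Since $H_1(E(K);\Z)\cong\Z$, the character sphere $S(G(K))$ consists of the two points $[\ab]$ and $[-\ab]$. Both characters have the same kernel, the commutator subgroup $G(K)'$. We use the standard criterion: for a character $\chi$ with discrete image, $\Ker\chi$ is of type $F_m$ (resp.\ $\FP_m$ over $R$) if and only if both $[\chi]$ and $[-\chi]$ lie in $\Sigma^m(G)$ (resp.\ $\Sigma^m(G;R)$). This is Bieri--Renz, \cite{BiRe88}, \cite[Theorem~C1.13]{Str13} for $m=1$. So the statement reduces to the following claim.

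\emph{Claim.} $\Sigma^m(G(K))$ is symmetric, and $G(K)'$ is of type $F_m$ (or $\FP_m$ over $R$) exactly when $K$ is fibered.

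Granting the claim, $S(G(K))$ is either all of $\{[\ab],[-\ab]\}$ or empty, according to whether $G(K)'$ has the relevant finiteness property.

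\textbf{Fibered case.} Suppose $K$ is fibered with fiber a compact surface $F$. Then $G(K)'=\pi_1(F)$ is a finitely generated free group. It is of type $F$, hence $F_m$ and $\FP_m$ over any $R$ for all $m$. The criterion therefore gives $\Sigma^m(G(K))=\Sigma^m(G(K);R)=S(G(K))$. We also note that $\Sigma^m(G)\subset\Sigma^m(G;\Z)\subset\Sigma^m(G;R)$ and that these invariants decrease in $m$.

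\textbf{Non-fibered case.} We show that $\Sigma^1(G(K);R)=\emptyset$; by monotonicity this empties every other invariant. By the symmetry of $\Sigma^1$ for $3$-manifold groups \cite[Corollary~F]{BNS87}, $\Sigma^1(G(K))$ is either empty or everything. If it were everything, $G(K)'$ would be finitely generated. Stallings' fibration theorem then makes $E(K)$ fibered, since $E(K)$ is irreducible for a nontrivial knot and the unknot is fibered anyway. So $\Sigma^1(G(K))=\emptyset$ when $K$ is not fibered.

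\textbf{Main obstacle: the homological invariant.} For $\Sigma^1(G;R)$ we need more: that $G(K)'$ is not even $\FP_1$ over $R$, together with symmetry of $\Sigma^1(G;R)$. My approach goes through Novikov homology. We have $[\chi]\in\Sigma^1(G;R)$ if and only if $H_0$ and $H_1$ of $G$ with coefficients in the Novikov ring $\widehat{RG}_\chi$ vanish (Bieri--Renz, Sikorav). Pass to the abelian quotient $R[t^{\pm1}]\to R((t))$. Vanishing there forces $H_1(G';R)$, the Alexander module over $R$, to be finitely generated over $R$. More precisely, the Alexander polynomial over $R$ must have leading (resp.\ trailing) coefficient a unit. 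The two conditions for $\pm\chi$ are exchanged by the symmetry $\Delta(t)\doteq\Delta(t^{-1})$, which gives symmetry at the level of this necessary condition.

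This is still not enough, because knots with monic Alexander polynomial need not be fibered. To exclude them for general $R$, I would invoke the stronger theorem that for $3$-manifold groups $\Sigma^1(G;R)$, and even the Novikov-homology version over $\Z$ and over fields, equals $\Sigma^1(G)$. This is known via Friedl--Vidussi twisted Alexander polynomial detection of fiberedness, and via Kielak's agrarian/RFRS methods, using that knot groups are virtually RFRS by Agol and Wise. I expect this reduction to be the real difficulty; everything else is formal.
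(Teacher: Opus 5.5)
Your treatment of $\Sigma^m(G(K))$ is the paper's argument. It combines the Bieri--Renz/Renz criterion (Theorems~\ref{thm:homological_BNS} and \ref{thm:homotopical_BNS}, which apply because $E(K)$ is aspherical and so $G(K)$ is of type $F$), Stallings' theorem, the symmetry of $\Sigma^1$ from \cite[Corollary~F]{BNS87}, and monotonicity in $m$.

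\textbf{The gap is in the homological half.} There you misdiagnose the obstacle and then close it with a citation that does not supply what you need. Friedl--Vidussi's detection of fibered classes by twisted Alexander polynomials and Kielak's RFRS/agrarian description of $\Sigma^1$ are results about \emph{which} characters lie in $\Sigma^1$. Neither is a theorem saying that $\Sigma^1(G;R)$ agrees with $\Sigma^1(G)$ for an arbitrary commutative ring $R$, so the step ``I would invoke the stronger theorem\dots'' is unsupported as written. The Novikov-homology and Alexander-polynomial discussion only gives a necessary condition, as you note yourself, so it does not help either.

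\textbf{The missing idea is that no $3$-manifold input is needed.} The equality $\Sigma^1(G;R)=\Sigma^1(G)$ holds for \emph{every} finitely generated group $G$ and every ring $R\neq 0$ (\cite[Korollar~1.6]{Ren88}, recorded in Section~\ref{sec:Preliminaries}), and the reason is elementary:
\begin{enumerate}[label=\textup{(\arabic*)}]
\item If the augmentation ideal of $R[G_\chi]$ is finitely generated, it is generated by elements $y-1$ with $y$ ranging over a finite set $Y\subset G_\chi$.
\item The quotient $R[G_\chi]/R[G_\chi]\{y-1 : y\in Y\}$ is the free $R$-module on the classes of $G_\chi$ under the equivalence relation generated by $g\sim gy$.
\item The augmentation induces an isomorphism from this quotient onto $R$. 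Since it sends every class to $1$, this forces there to be a single class.
\item ``A single class'' is the same condition over $\Z$. Hence $[\chi]\in\Sigma^1(G;\Z)=\Sigma^1(G)$.
\end{enumerate}
In particular, $\FP_1$ over $R$ is just finite generation. With this fact, the non-fibered case gives
\[
\Sigma^m(G(K);R)\subset\Sigma^1(G(K);R)=\Sigma^1(G(K))=\emptyset .
\]
The fibered case is already handled by your argument, so the proof is complete without your last two paragraphs.
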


This proposition immediately follows from a basic property of $\Sigma^m$ and the following result by Stallings~\cite{Sta61}. 
For an $n$-component oriented link $L$ and its meridians $\mu_1, \dots, \mu_n$, define a homomorphism $G(L)\twoheadrightarrow \Z$ by $\mu_i \mapsto 1$ for all $i$.
Then, the kernel of the map is finitely generated if and only if $L$ is fibered.
Note that the kernel coincides with $[G(L),G(L)]$ if $n=1$ and it is larger than $[G(L),G(L)]$ if $n\geq 2$.
Therefore, it is a subtle problem whether $[G(L),G(L)]$ is finitely generated when $n\geq 2$.
Theorem~\ref{thm:link} below provides a complete answer to this problem: $[G(L),G(L)]$ is finitely generated if and only if $L$ is the Hopf link.

\begin{theorem}
\label{thm:link}
For a link $L$ in $S^3$ of at least two components, $\Sigma^1(G(L)) = S(G(L))$ if $L$ is the Hopf link and $\Sigma^1(G(L)) \subsetneq S(G(L))$ otherwise.
In addition, if $L$ is non-split, then $\Sigma^m(G(L))\cap S\Q(G(L)) = \Sigma^1(G(L))\cap S\Q(G(L))$ for a positive integer $m$.
The same holds for $\Sigma^m(G(L); R)$.
\end{theorem}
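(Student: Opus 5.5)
We will work with the compact $3$-manifold $M=E(L)$ and use the Thurston norm description of $\Sigma^1$. For irreducible $M$ (that is, $L$ non-split), Bieri--Neumann--Strebel, refined by Thurston's fibration theorem, show that $\Sigma^1(G(L))$ is the union of the open cones over the fibered faces of the Thurston norm ball, projected to $S(G(L))$. In particular, a rational class lies in $\Sigma^1$ if and only if it is fibered. This characterization holds for irrational classes too, by openness of $\Sigma^1$ and of the fibered cones.

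\textbf{Step 1: the split case.} If $L$ is split, then $G(L)=G_1*G_2$ is a nontrivial free product with both factors of infinite abelianization. For a free product of two groups that are not both of order $2$, $\Sigma^1$ is empty: any character's kernel fails to be finitely generated, since the Bass--Serre tree gives an essential action. So $\Sigma^1=\emptyset\subsetneq S(G(L))$.

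\textbf{Step 2: the Hopf link.} Here $G(L)\cong\Z^2$, so every kernel is finitely generated, and indeed every $\Sigma^m(\Z^2)$ is all of $S(G(L))$.

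\textbf{Step 3: non-split, non-Hopf.} We must show that some character is not fibered. Equivalently, the fibered cones do not cover $H^1(M;\R)\setminus\{0\}$.
\begin{itemize}
\item If the Thurston norm $x$ vanishes identically, then every nonzero class is represented by tori and annuli. Since $M$ is irreducible with toroidal boundary of at least two components, this forces $M$ to be Seifert fibered with $b_1\geq 2$. Among link exteriors, the only ones on which every class fibers are $T^2\times I$, i.e.\ the Hopf link. For other Seifert fibered exteriors, there are classes vanishing on the fiber direction; such classes are non-fibered, being represented by vertical tori. I will need to check this case concretely via the classification of Seifert link exteriors in $S^3$ (torus links with cores, keychain-type links).
\item If $x$ is nonzero, the norm ball has at least one codimension-one hyperplane through the origin in the kernel, namely $\ker x$, or a face that is not top-dimensional. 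More simply, $S(G(L))$ is connected of dimension $n-1\geq 1$, the fibered part is a disjoint union of open cones, and distinct fibered faces are disjoint open sets. So if every character were fibered, there would be one fibered cone equal to the whole sphere, impossible for a polyhedral norm cone unless $x$ is a linear function on all of $H^1$. A nonzero seminorm is never linear on all of $H^1$, because $x(-\phi)=x(\phi)$. Hence some character is non-fibered.
\end{itemize}
This symmetry argument actually handles the $x\neq 0$ case cleanly, so the main obstacle is the degenerate case $x\equiv 0$. There I will argue via JSJ and geometrization that $M$ is Seifert fibered, and verify that only $T^2\times I$ has all classes fibered. The remaining Seifert fibered exteriors have regular fiber class $h$ with nontrivial image in $H_1$, and characters killing $h$ give non-fibered classes.

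\textbf{Step 4: higher invariants.} For non-split $L$, let $\chi$ be a rational character in $\Sigma^1$. Then $\chi$ is fibered, so its kernel is the fundamental group of a compact surface, which has type $F$; hence $\chi\in\Sigma^m$ and $\chi\in\Sigma^m(\,\cdot\,;R)$ for all $m$. Conversely, $\Sigma^m\subseteq\Sigma^1$ and $\Sigma^m(G;R)\subseteq\Sigma^1(G;R)$. For $3$-manifold groups, $\Sigma^1(G;R)\cap S\Q=\Sigma^1\cap S\Q$: a non-fibered rational class has kernel that is not even $\FP_1$ over $R$, because the Alexander-type Novikov homology is nonvanishing (Bieri--Neumann--Strebel's argument is homological). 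This gives the equalities, and the same reasoning applies with $R$ coefficients throughout.
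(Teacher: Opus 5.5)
\textbf{The gap is in Step~3, in the degenerate case $x\equiv 0$.} You assert that vanishing of the Thurston norm forces $M$ to be Seifert fibered, and then defer to a classification of Seifert fibered link exteriors. Neither step is carried out, and the justifications you offer do not work as stated.
\begin{itemize}
\item ``Every class is represented by tori and annuli'' does not by itself imply that $M$ is Seifert fibered. You would need a genuine JSJ analysis, including control of which pieces receive nonzero restrictions of classes.
\item ``Represented by vertical tori'' does not show that a class is non-fibered; torus bundles are fibered by tori.
\end{itemize}
As written, this subcase is unproved.

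The detour is also unnecessary. Suppose every nonzero class were fibered, and take any primitive fibered class $\phi$ with fiber $F$. Then $F$ is connected. It meets each of the $n\ge 2$ boundary tori, because the fibration restricts to a fibration of each boundary torus over the circle. It is also norm-minimizing, so $\chi_{-}(F)=x(\phi)$. If $\chi(F)<0$, then $x(\phi)>0$ and your connectedness-plus-symmetry argument applies. If $\chi(F)\ge 0$, then $F$ is an annulus, so $M\cong T^2\times I$ and $L$ is the Hopf link.

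This is essentially the paper's proof, except that the paper uses the specific total-meridian class. Stallings' theorem together with Lemma~\ref{lem:fg} disposes of every $L$ that is not fibered for this class. That includes all split links, so your free-product argument in Step~1 is not needed. When $L$ is fibered, the fiber is a Seifert surface with boundary exactly $L$. Either it has negative Euler characteristic, in which case Corollary~\ref{cor:Thu86} applies, or it is an annulus whose monodromy must be a $\pm 1$ twist, which gives the Hopf link.

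\textbf{Smaller points.}
\begin{itemize}
\item Your connectedness argument for $x\neq 0$ is valid. Disjoint open fibered cones covering the connected set $H^1(M;\R)\setminus\{0\}$ would have to be a single cone on which $x$ is linear, which contradicts $x(-\phi)=x(\phi)$. This is cleaner than the paper's face-intersection argument for Corollary~\ref{cor:Thu86}.
\item In Step~4, Renz's criterion requires $G$ to be of type $F_m$. This is automatic, either from the extension $1\to\pi_1(F)\to G\to\Z\to 1$ or, as in the paper, from asphericity of $E(L)$ via the sphere theorem.
\item You do not need Novikov homology. $\Sigma^1(G;R)=\Sigma^1(G)$ holds for every group (Renz). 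Your argument ``the kernel is not $\FP_1$ over $R$'' only excludes one of $[\pm\chi]$ from $\Sigma^1(G;R)$ unless you also invoke symmetry.
\end{itemize}
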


Here, for a group $G$, let $S\Q(G)$ denote the image of the natural map $\Hom(G, \Q)\to S(G)$.
Let us turn our attention to $2$-knots in $S^4$.
We are interested in the difference of the BNSR invariants for classical link groups and $2$-knot groups.
For instance, $\Sigma^1(G(L))$ is symmetric as mentioned above.
It is natural to ask whether there exists a $2$-knot $K$ such that $\Sigma^1(G(K))$ is not symmetric.
The next proposition answers this question.

\begin{proposition}
\label{prop:BS12}
There exists a ribbon $2$-knot $K$ such that $G(K)$ is isomorphic to the Baumslag-Solitar group $\BS(1,2)$, and then $\Sigma^1(G(K))$ is a singleton $\{[\ab]\}$.
\end{proposition}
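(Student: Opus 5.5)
Our plan has two independent parts: realize $\BS(1,2)$ as the group of a ribbon $2$-knot, and then compute $\Sigma^1(\BS(1,2))$.

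\emph{Realization.} We use the presentation $\BS(1,2)=\ang{a,t \mid tat^{-1}=a^2}$. This is a Wirtinger-type presentation with two generators and one relator of the form $x_2 = w x_1 w^{-1}$. Set $x_1=a$ and $x_2=t^{-1}at$. Then $x_1,x_2$ are conjugate and generate the group, since $t$ can be recovered from the relation. More concretely, the standard characterization applies here (Yajima's theorem: the groups of ribbon $2$-knots are exactly the groups with deficiency-one Wirtinger presentations). We rewrite the group as $\ang{x,y \mid y = w x w^{-1}}$ with $x,y$ meridians. From $tat^{-1}=a^2$ we get $a^{-1}tat^{-1}a = a$, so $(a^{-1}t)a(a^{-1}t)^{-1}=a$... this naive attempt degenerates, so instead we substitute $y=t a t^{-1}$. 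Since $y=a^2$ and $ty=a^2t$, we have $t = y a^{-1}$? That does not hold in general, so we choose generators more carefully: put $x=a$ and $y=ta$. In the abelianization $a\mapsto 0$, so $x$ and $y$ are not conjugate. We therefore use the other basis, $x=t$ and $y=ta$. Then $a=x^{-1}y$, and the relation becomes $y x^{-1} = x^{-1}y x^{-1}y$, i.e.\ $x y x^{-1} = y x^{-1} y$, i.e.\ $y = (y x^{-1})^{-1} x y x^{-1} (yx^{-1})$. This has exactly the required form $y=wxw^{-1}$ conjugated appropriately, namely $y = w x w^{-1}$ with $w = x^{-1}y x^{-1}\cdot$(adjust). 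Checking that it is precisely a one-relator Wirtinger presentation is a short word computation. Yajima's theorem then gives a ribbon $2$-knot (a fusion of a two-component trivial $2$-link along one band) with this group. We expect this word manipulation to be the main, though minor, obstacle.

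\emph{BNS invariant.} Here $H^1(\BS(1,2);\R)\cong\R$, since $a\mapsto 0$ and $t\mapsto 1$. Hence $S(G)=\{[\chi],[-\chi]\}$, where $\chi$ is (up to sign) the abelianization with $\chi(t)=1$; we orient so that $[\ab]$ corresponds to $\chi$. We have $\Ker\chi = \Z[1/2]$, which is not finitely generated, so at least one of $\pm\chi$ lies outside $\Sigma^1$. For the precise answer we use the ascending HNN criterion (Brown, or BNS): if $G=A*_{\phi}$ is an ascending HNN extension with finitely generated base $A=\ang{a}$ and stable letter $t$ satisfying $tAt^{-1}\subseteq A$, then the character $\chi$ with $\chi(t)=1$ or $-1$ (depending on convention) lies in $\Sigma^1$. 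Concretely, we can use the definition via the Cayley graph: the subgraph on $\{g : \chi(g)\ge 0\}$ is connected, because every element can be written as $t^{-k} a^m t^{l}$ and conjugation by $t$ contracts into $\ang{a}$. The character $-\chi$ is not in $\Sigma^1$, because $\Ker\chi$ is not finitely generated and the BNS criterion requires both $\pm\chi\in\Sigma^1$. Finally, we match the sign with $[\ab]$ for the meridian orientation $x=t$. With that sign fixed, $\Sigma^1(G(K))=\{[\ab]\}$.
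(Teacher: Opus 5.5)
Your overall route matches the paper's: Yajima's characterization, plus the value of $\Sigma^1(\BS(1,2))$, which you try to verify directly instead of citing. However, neither half is completed correctly as written.

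\textbf{Realization.} You never actually produce the Wirtinger relation. The displayed identity $y=(yx^{-1})^{-1}xyx^{-1}(yx^{-1})$ is false in $G$: using $xyx^{-1}=yx^{-1}y$, its right side equals $y^2x^{-1}$, so it would force $x=y$, i.e.\ $a=1$. The conjugator $w$ is also left as ``(adjust)''. Your final choice $x=t$, $y=ta$ does work. From $xyx^{-1}=yx^{-1}y$ one gets $y=(xy^{-1})\,x\,(xy^{-1})^{-1}$, so $\BS(1,2)\cong\ang{x,y\mid y=(xy^{-1})x(xy^{-1})^{-1}}$. This is exactly the paper's presentation $\ang{y,t\mid y=(ty^{-1})t(ty^{-1})^{-1}}$. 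The earlier false starts should be deleted. For instance, $a$ and $t^{-1}at$ both lie in $\Ker\chi$, so they neither generate $G$ nor can serve as meridians.

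\textbf{The sign in the BNS step.} Your argument for the sign is wrong for the paper's convention. There the vertices of the universal cover are labelled by $G$, so edges join $g$ and $gs$, and a path from $1$ spelled by a word visits the word's prefixes. The word $t^{-k}a^mt^l$ therefore passes through $t^{-1},\dots,t^{-k}$, where $\chi<0$.

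In fact, for $\chi(t)=1$ the graph $\Gamma_\chi$ is disconnected. Use the faithful model $a\mapsto(z\mapsto z+1)$, $t\mapsto(z\mapsto 2z)$. Right multiplication by $a^{\pm1}$ sends $z\mapsto 2^jz+b$ to $z\mapsto 2^jz+b\pm 2^j$, and by $t^{\pm1}$ sends it to $z\mapsto 2^{j\pm1}z+b$. Hence every vertex reachable from $1$ inside $\Gamma_\chi$ has $b\in\Z$, and $t^{-1}at$ (which is $z\mapsto z+1/2$) is unreachable.

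Your word argument proves connectivity for $-\chi$ instead, since the prefixes of $t^{-k}a^mt^l$ with $l\le k$ all have $\chi\le 0$. Because $t$ is a meridian, $\ab(t)=1$. So for the orientation given by your presentation, $\Sigma^1(G(K))=\{[-\ab]\}$, which is what the paper's proof actually records.

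The statement's $\{[\ab]\}$ is then obtained by reversing the orientation of $K$ (inverting the Wirtinger generators), which replaces $\ab$ by $-\ab$. Your remark that you ``match the sign'' does not replace deriving the correct sign for a fixed orientation first.

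With these two corrections, your direct argument is a fine self-contained replacement for the paper's citation: $\Ker\chi\cong\Z[1/2]$ is not finitely generated, so $[\chi]$ and $[-\chi]$ cannot both lie in $\Sigma^1$, and the connectivity argument then shows $[-\chi]$ does.
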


We next focus on twist-spun knots.
For a knot in $S^3$ and an integer $k$, let $K^k$ denote the $k$-twist-spun knot in $S^4$.
See \cite{Zee65} for the precise definition.
Here, $K^0$ is the standard spun knot and $K^{\pm 1}$ are the unknot as shown in \cite[Corollary~2]{Zee65}.
Hence, $G(K^0) \cong G(K)$ and $G(K^{\pm 1}) \cong \Z$.

\begin{theorem}
\label{thm:twist_spun}
Let $K$ be a prime knot in $S^3$ and $|k| \geq 2$.
If $G=G(K^k)$ is of type $F_{m_0}$ \textup{(}resp.\ $\FP_{m_0}$ over $R$\textup{)}, then $\Sigma^m(G) = S(G)$ \textup{(}resp.\ $\Sigma^m(G; R) = S(G)$\textup{)} for $m\leq m_0$.
\end{theorem}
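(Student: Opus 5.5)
The plan is to use the well-known fact that the $k$-th power of a meridian is central in a twist-spun knot group, together with the standard criterion that characters which are nonzero on the centre lie in every $\Sigma^m$ permitted by the finiteness type of the group.

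First, $H_1(G(K^k);\Z)\cong\Z$, so $S(G)=\{[\ab],[-\ab]\}$, and it suffices to show that both $[\ab]$ and $[-\ab]$ lie in $\Sigma^m(G)$ (resp.\ $\Sigma^m(G;R)$) for $m\le m_0$. Zeeman's construction \cite{Zee65} gives two pieces of information:
\begin{itemize}
\item the exterior of $K^k$ fibres over $S^1$, with fibre the punctured $k$-fold cyclic branched cover $M_k^\circ$ of $S^3$ along $K$;
\item the monodromy has order $k$.
\end{itemize}
Consequently, writing $t$ for a meridian of $K^k$, the element $z=t^k$ is central in $G=\pi_1(M_k^\circ)\rtimes\ang{t}$. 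I will take this centrality as the key input. It is exactly the step where $|k|\ge2$ enters: for $|k|=1$ the knot is trivial. Primality of $K$ is used only to invoke the standard description of $G(K^k)$ cleanly, for instance to know that $M_k$ is irreducible. The heart of the argument is that
\[
\ab(z)=k\neq0=\ab(1),
\]
so $z$ is a central element on which both $\ab$ and $-\ab$ are nonzero.

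Next, I would invoke the classical criterion of Bieri--Renz (see also Meinert, and \cite[Chapter~B]{Str13} for $m=1$). If $G$ is of type $F_{m_0}$ (resp.\ $\FP_{m_0}$ over $R$) and $\chi$ is a character with $\chi(Z(G))\neq0$, then $[\chi]\in\Sigma^{m_0}(G)$ (resp.\ $\Sigma^{m_0}(G;R)$). The idea behind this criterion is as follows. Choose $z\in Z(G)$ with $\chi(z)>0$. Multiplication by $z$ is a $G$-equivariant map of the universal cover (resp.\ of a free resolution) that is homotopic to the identity, because $z$ is central. This map pushes the $\chi$-negative part into the half-space $\{\chi\ge\chi(z)\}$. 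That gives the required $(m_0-1)$-connectivity (resp.\ $\FP_{m_0}$-acyclicity) of the filtration $G_\chi=\{g:\chi(g)\ge0\}$. Applying this to $\chi=\pm\ab$ with the central element $z=t^{\pm k}$ gives $\Sigma^{m_0}(G)=S(G)$, and monotonicity $\Sigma^{m}\supseteq\Sigma^{m_0}$ for $m\le m_0$ finishes the proof.

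As a cross-check, the same conclusion follows from the fibration. Both characters have kernel $N=\pi_1(M_k^\circ)\cong\pi_1(M_k)$, and by the Bieri--Renz criterion, for a discrete character both $[\chi]$ and $[-\chi]$ lie in $\Sigma^m$ if and only if $\ker\chi$ is of type $F_m$. I would expect closed $3$-manifold groups to be of type $F_\infty$ via the prime decomposition, but I would not rely on this, since the statement is phrased under the hypothesis on $G$.

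The main obstacle I anticipate is verifying the centrality of $t^k$ rigorously from the definition of twist-spinning, that is, identifying the monodromy as the order-$k$ deck transformation and checking that the meridian $t$ lifts so that $t^k$ commutes with the fibre group. I would first try to cite Zeeman's fibration theorem and compute the semidirect product presentation directly.
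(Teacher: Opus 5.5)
Your proposal is correct, but your main argument is genuinely different from the paper's. The paper's proof is essentially your ``cross-check''. It uses Theorems~\ref{thm:homological_BNS} and \ref{thm:homotopical_BNS} to reduce the statement to showing that $G'=[G,G]\cong\pi_1(\Sigma_k(K))$ is of type $F_\infty$. This is immediate if $G'$ is finite. Otherwise, primality of $K$ and the equivariant sphere theorem make $\Sigma_k(K)$ irreducible, hence aspherical.

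You instead use only the periodicity of Zeeman's monodromy $h$. The step you flag as the main obstacle is easy: take the basepoint on the fixed arc of $h$. Then $h_*^k=\mathrm{id}$ exactly, so $t^k$ commutes with the fibre group and with $t$, and $\ab(t^k)=\pm k\neq 0$. The central-element criterion then places $[\ab]$ and $[-\ab]$ in $\Sigma^{m_0}$. One small precision in your sketch: left multiplication by $z$ is a $G$-map because $z$ is central. It is $G$-homotopic to the identity because the universal cover is contractible (resp.\ because augmentation-preserving chain maps between free resolutions are homotopic), not because of centrality. It raises every valuation by $\chi(z)>0$.

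What your route buys is generality. No $3$-manifold topology of the fibre enters, so primality of $K$ is never used. Nor is $|k|\geq 2$: only $k\neq 0$ matters, and for $|k|=1$ one has $G\cong\Z$ and the argument still applies. So your remark that centrality is where $|k|\geq 2$ enters is inaccurate.

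What the paper's route buys is a finiteness statement. Since $G'$ is of type $F_\infty$ and $G/G'\cong\Z$, the group $G$ itself is of type $F_\infty$. Hence the hypothesis on $G$ is automatically satisfied, whereas your argument has to take it as an input.
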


Note here that while $E(K)$ is always an Eilenberg-MacLane space for a knot $K$ in $S^3$, it is not true for $2$-knots since $2$-knot groups can have torsion elements.
Therefore, in Theorem~\ref{thm:twist_spun}, the finiteness properties $F_{m_0}$ and $\FP_{m_0}$ are non-trivial for $2$-knot groups.
Nevertheless, there are concrete examples to which Theorem~\ref{thm:twist_spun} can be applied.

\begin{corollary}
\label{cor:Zee65}
Let $K$ be the trefoil knot and $m\geq 1$. 
Then $\Sigma^m(G(K^5)) = \Sigma^m(G(K^5); R) = S(G(K^5))$.
\end{corollary}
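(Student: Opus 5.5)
The plan is to reduce Corollary~\ref{cor:Zee65} to Theorem~\ref{thm:twist_spun} applied with $K$ the trefoil, which is prime, and $k=5$. For this it suffices to show that $G=G(K^5)$ is of type $F_\infty$, and hence of type $\FP_\infty$ over any $R$. Then Theorem~\ref{thm:twist_spun} gives $\Sigma^m(G)=S(G)$ and $\Sigma^m(G;R)=S(G)$ for every $m\geq 1$.

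To obtain the finiteness property, I would use Zeeman's fibration theorem~\cite{Zee65}. For $|k|\geq 1$ the exterior of $K^k$ fibers over $S^1$. The fiber is the punctured $k$-fold cyclic branched cover of $S^3$ along $K$.

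For the trefoil and $k=5$, the $5$-fold cyclic branched cover is the Poincaré homology sphere $\Sigma(2,3,5)$. Hence the fiber is a punctured $\Sigma(2,3,5)$, and we get a split short exact sequence
\[
1\to \pi_1(\Sigma(2,3,5))\to G\to \Z\to 1.
\]
Here $\pi_1(\Sigma(2,3,5))$ is the binary icosahedral group $P$, of order $120$. So $G\cong P\rtimes \Z$ is a finite-by-cyclic group.

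Next I would argue that any extension of a finite group by $\Z$ is of type $F_\infty$. A finite group is $F_\infty$, since its Milnor classifying space has finitely many cells in each dimension. The class $F_\infty$ is closed under extensions. Alternatively, $G$ contains the finite-index subgroup $P\times\Z$: some power of the monodromy acts on $P$ by an inner automorphism, so after passing to that power and correcting by an element of $P$ the generator commutes with $P$. The group $P\times \Z$ is clearly $F_\infty$, and $F_\infty$ passes to finite-index overgroups. Since $F_m$ implies $\FP_m$ over any commutative ring $R$, the homological statement follows as well.

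The main obstacle is making sure Theorem~\ref{thm:twist_spun} is stated in a form covering all $m$ simultaneously. With $m_0$ arbitrary, because $G$ is $F_{m_0}$ for every $m_0$, this is immediate.

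A direct check is also available, since $\ker$ of any nonzero character on $G$ is the finite group $P$, which is $F_\infty$. Indeed, $S(G)=\{[\ab],[-\ab]\}$ because $H^1(G;\R)\cong\R$. By the standard characterization of $\Sigma^m$ for characters of rank one, $\Sigma^m(G)\supseteq\{\pm[\ab]\}$ exactly when the kernel is $F_m$. This alternative argument confirms the corollary independently of the general theorem.
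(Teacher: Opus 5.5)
Your proposal is correct and essentially the paper's argument: show $G(K^5)$ is of type $F_\infty$ because it is virtually $\Z$, then invoke Theorem~\ref{thm:twist_spun} for every $m_0$. The only difference is that the paper quotes Zeeman's identification $G(K^5)\cong 2I\times\Z$ directly, whereas you obtain $P\rtimes\Z$ from the fibration whose fiber is the punctured Poincar\'e sphere $\Sigma(2,3,5)$.
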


The organization of this paper is as follows.
In Section~\ref{sec:Preliminaries}, we recall the definitions and fundamental properties of the BNSR invariants, the Thurston norm, and fibered faces as preliminaries. 
Section~\ref{sec:Proofs} is devoted to the proofs of our main results. 
Specifically, we investigate link groups in $S^3$ in Section~\ref{subsec:link}, examine 2-knot groups in $S^4$ in Section~\ref{subsec:2-knot}, and discuss further group-theoretic differences between classical knot groups and 2-knot groups in Section~\ref{subsec:difference}.

%%%%%%%%%%%%
\subsection*{Acknowledgments}
The author would like to thank Toshiyuki Akita, Takuya Sakasai, Masatoshi Sato for fruitful discussions.
He also thanks Takahiro \mbox{Kitayama} and Mizuki Fukuda for helpful comments about the Thurston norm and $2$-knots.
This study was supported in part by JSPS KAKENHI Grant Numbers JP23K12974.
%%%%%%%%%%%%

\section{Preliminaries}
\label{sec:Preliminaries}

\subsection{BNSR invariants}
In this subsection, we briefly review the definition of the BNSR invariants and their properties used in this paper.
We refer the reader to \cite{BNS87}, \cite{BiRe88}, \cite[Section~18.3]{Geo08}, \cite{KoVi25}, and \cite{Str13}.
Let $G$ be a finitely generated group.
For a character $\chi\colon G \to \R$, we write $G_\chi$ for the submonoid consisting of $g$ such that $\chi(g)\geq 0$.
Let $R$ be a commutative ring with unity and let $m$ be a positive integer.
Then, the \emph{homological BNSR invariant} $\Sigma^m(G;R)$ is defined by
\[
\Sigma^m(G; R) = \{ [\chi]\in S(G)\mid \text{$R$ is of type $\FP_m$ as a left $R[G_\chi]$-module} \},
\]
where $G$ acts on $R$ trivially.

For a CW complex $\Gamma$, let $\Gamma^{(m)}$ denote its $m$-skeleton.
When the vertices are labeled by elements of $G$, we write $\Gamma_\chi$ for the full subcomplex spanned by vertices $g$ with $\chi(g)\geq 0$.
Then, the \emph{homotopical BNSR invariant} $\Sigma^m(G)$ is defined by
\[
\Sigma^m(G) = \left\{ [\chi]\in S(G) \biggm| \parbox{18em}{there is a $K(G,1)$ space with single $0$-cell \\ such that $\Gamma^{(m)}_\chi$ is $(m-1)$-connected} \right\},
\]
where $\Gamma$ is the universal cover of the $K(G,1)$ space.
As shown in \cite[Korollar~1.6]{Ren88}, we have $\Sigma^m(G) \subset \Sigma^m(G; R)$ and it is an equality when $m=1$.
Moreover, $\Sigma^m(G) = \Sigma^m(G; \Z)\cap \Sigma^2(G)$.
The following results are most fundamental.

\begin{theorem}[{\cite[Theorem~B]{BiRe88}}]
\label{thm:homological_BNS}
Let $G$ be a group of type $\FP_{m_0}$ over $R$ and $N$ a normal subgroup of $G$ containing $[G, G]$, and $1\leq m \leq m_0$.
Then, $N$ is of type $\FP_m$ over $R$ if and only if $\{[\chi] \in S(G) \mid \chi|_N = 0\} \subset \Sigma^m(G; R)$.
\end{theorem}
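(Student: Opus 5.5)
The statement is Bieri--Renz Theorem~B, so I would reprove it along the standard Bieri--Renz route. The strategy is to reduce the finiteness question for $N$ to finiteness properties of $R$ over the monoid rings $R[G_\chi]$, using a compactness argument on the sphere $S(G,N)=\{[\chi]\in S(G)\mid \chi|_N=0\}$. Since $N\supset[G,G]$, the quotient $Q=G/N$ is a finitely generated abelian group. Modulo torsion, $Q\cong\Z^k$, and $S(G,N)$ is identified with the character sphere $S(Q)\cong S^{k-1}$. The first step is a harmless reduction: replace $N$ by the preimage $N'$ of the torsion subgroup of $Q$. Then $N$ has finite index in $N'$, so $N$ is $\FP_m$ if and only if $N'$ is, and $S(G,N)=S(G,N')$. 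Hence we may assume $Q\cong\Z^k$.

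For the direction ``$N$ of type $\FP_m$ $\Rightarrow$ $S(G,N)\subset\Sigma^m(G;R)$'', fix $\chi$ vanishing on $N$. It factors through $Q=\Z^k$, and the monoid $Q_\chi$ is a finitely generated monoid of a well-understood kind (a half-space monoid, possibly with a nontrivial subgroup on the boundary). Take a free resolution of $R$ over $RN$ that is finitely generated in degrees $\le m$. Inducing it up to $R[G_\chi]$ exhibits $R[G_\chi]\otimes_{RN}R = R[Q_\chi]$ as having a resolution of the required finite type. One then checks directly that $R$ is of type $\FP_m$ over $R[Q_\chi]$, for instance via a Koszul-type resolution over the polynomial-like ring $R[Q_\chi]$. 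Composing these two facts (a change-of-rings spectral sequence or a mapping-cone argument) gives that $R$ is of type $\FP_m$ over $R[G_\chi]$.

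For the converse I would induct on $k$. When $k=1$, $Q=\Z$ and $S(G,N)=\{[\chi],[-\chi]\}$. I would use the valuation criterion: a free resolution $F_\ast$ of $R$ over $RG$, finitely generated up to degree $m$, together with a chain endomorphism $\varphi$ lifting the identity with $v_\chi(\varphi(x))>v_\chi(x)$. This condition is equivalent to $[\chi]\in\Sigma^m(G;R)$. Applying it to both $\chi$ and $-\chi$ lets one contract $F_\ast$ restricted to $N$ onto a finite band $\{a\le\chi\le b\}$. This is the Brown/Bieri--Strebel mechanism: an ascending HNN-type argument performed in both directions. It yields a resolution of $R$ over $RN$ that is finite in degrees $\le m$.

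For $k\ge2$, the main obstacle is that no single character controls $N$. Here I would use the openness of $\Sigma^m(G;R)$ and compactness of $S(Q)$ to choose finitely many characters in $S(G,N)$, each with a uniform ``push'' constant $\varphi_i$. I would then build a filtration of $\tilde X$ (the $RG$-resolution) by convex polyhedral regions in $Q\otimes\R$, showing that the region near the origin is an $m$-acyclic finite-type approximation. Concretely, I would try proving that $\ker(G\to\Z^{k})$ is $\FP_m$ via the intermediate kernel $N_1=\ker(G\to\Z)$ for a suitable projection. This requires showing that $S(N_1,N)\subset\Sigma^m(N_1;R)$, which is the delicate step: one needs that characters of $N_1$ extending from $G$ inherit $\Sigma^m$-membership from $G$. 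I expect this to need the full strength of the valuation criterion with uniform constants rather than a formal induction.
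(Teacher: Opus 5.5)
The paper gives no proof of this statement; it is quoted from Bieri--Renz. Your sketch therefore has to stand on its own, and it has a genuine gap where the theorem is hard: the ``if'' direction when $k=\operatorname{rk}(G/N)\ge 2$, which you leave as an expectation.

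\textbf{The rank-$\ge 2$ converse is missing.} Your induction through $N_1=\ker(G\to\Z)$ needs $S(N_1,N)\subseteq\Sigma^m(N_1;R)$, which you neither prove nor cite. The extensions of $\psi\in S(N_1,N)$ to $G$ form an open half great circle in $S(G,N)\subseteq\Sigma^m(G;R)$. But converting this into $[\psi]\in\Sigma^m(N_1;R)$ means controlling regions like $\{\chi\ge 0\}\cap\{|\chi_1|\le c\}$, that is, two directions at once. That is the original difficulty again, now in rank two.

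Your other plan has the same problem. Compactness and openness give you finitely many push maps $\varphi_i$. But each $\varphi_i$ is $RG$-linear, so it moves \emph{every} cell in the $\chi_i$-direction. Applied to a chain whose support surrounds $N$ in $Q\otimes\R$, it pushes the part where $\chi_i$ is already large even further from $N$.

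The rank-one case works because the two half-spaces $\{v_\chi\ge a\}$ and $\{v_{-\chi}\ge -b\}$ together cover $F$ and meet in a band, so a Mayer--Vietoris argument closes it. With three or more directions the relevant intersections of half-spaces are wedges and other polyhedral regions. Their essential acyclicity is not supplied by the individual $\chi_i$ lying in $\Sigma^m(G;R)$.

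What is missing is a patching argument: apply different $\varphi_i$ to different parts of a chain, according to its direction in $Q\otimes\R$, and repair the seams. One way is to build, skeleton by skeleton, an $RN$-linear chain endomorphism lifting the identity that moves every cell far from $N$ strictly closer to $N$. This uses a filling constant that is uniform over the compact sphere $S(G,N)$, with $\varphi_i$ replaced by powers $\varphi_i^n$ to beat that constant. This is the technical core of the cited proof. Without it, you have the ``if'' direction only for $G/N$ virtually cyclic.

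\textbf{A smaller, fixable error in the forward direction.} $Q_\chi$ is a finitely generated monoid only when $\chi(Q)\cong\Z$. For $k\ge 2$, $S(G,N)$ contains characters with $\chi(Q)$ of rank $\ge 2$ (for $Q=\Z^2$, e.g.\ $(a,b)\mapsto a+\sqrt2\,b$). For these, $Q_\chi$ is not finitely generated and $R[Q_\chi]$ is nothing like a polynomial ring.

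The conclusion still holds. Choose a basis $q_1,\dots,q_k$ of $Q$ with every $\chi(q_i)>0$. The Koszul complex on $q_1-1,\dots,q_k-1$ over $R[Q_\chi]$ is then a finite free resolution of $R$. To see this, compare it with the Koszul complex over the Laurent ring $RQ$. The cokernel $RQ/R[Q_\chi]$ is acted on locally nilpotently by $q_1$, so $q_1-1$ is bijective on it and its Koszul complex is acyclic. Alternatively, apply the valuation criterion to $Q$ with multiplication by some $q$ with $\chi(q)>0$.

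With this repair, inducing from $RN$ to $R[G_\chi]$ (free as a right $RN$-module) and the change-of-rings step give the forward direction as you describe.
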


See also \cite[Theorem~5]{Koc06JPAA}.

\begin{theorem}[{\cite[Satz~C]{Ren88}}]
\label{thm:homotopical_BNS}
Let $G$ be a group of type $F_{m_0}$ and $N$ a normal subgroup of $G$ containing $[G, G]$, and $1\leq m \leq m_0$.
Then, $N$ is of type $F_m$ if and only if $\{[\chi] \in S(G) \mid \chi|_N = 0\} \subset \Sigma^m(G)$.
\end{theorem}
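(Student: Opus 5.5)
The plan is to pass to the abelian quotient $Q=G/N$ and apply Brown's criterion for finiteness properties to the action of $N$ on the universal cover of a good $K(G,1)$. Since $N\supseteq[G,G]$, the quotient $Q$ is a finitely generated abelian group. Let $N'\supseteq N$ be the preimage of the torsion subgroup of $Q$. Then $N$ has finite index in $N'$, so $N$ is of type $F_m$ if and only if $N'$ is. Moreover, a character vanishes on $N$ if and only if it vanishes on $N'$, because real characters kill torsion. So we may assume $Q\cong\Z^r$, in which case $\{[\chi]\in S(G)\mid \chi|_N=0\}$ is a great subsphere $S^{r-1}\subset S(G)$, naturally identified with $S(\Z^r)$.

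Next fix a $K(G,1)$ space $X$ with a single $0$-cell and finite $m$-skeleton, which exists because $G$ is of type $F_{m_0}$ and $m\le m_0$. Let $\Gamma$ be its universal cover, and choose a $G$-equivariant map $h\colon\Gamma^{(m)}\to Q\otimes\R=\R^r$, linear on cells. For $\chi$ vanishing on $N$, write $\chi=\bar\chi\circ h$ on vertices. Then $\Gamma^{(m)}_\chi$ agrees, up to bounded error, with $h^{-1}$ of a half-space, and the $N$-cocompact subcomplexes of $\Gamma^{(m)}$ are exactly those lying in $h^{-1}(B)$ for bounded sets $B\subset\R^r$.

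By Brown's criterion, $N$ is of type $F_m$ if and only if the filtration $\{h^{-1}(B_t)\}_t$, with $B_t$ balls of radius $t$, is essentially $(m-1)$-connected. On the other side, the standard reformulation of Renz's definition says that $[\chi]\in\Sigma^m(G)$ if and only if, for some $d\ge 0$, the inclusion $\Gamma^{(m)}_\chi\hookrightarrow\Gamma^{(m)}_{\chi\ge -d}$ induces the zero map on $\pi_k$ for $k<m$. So both conditions become statements about essential connectivity of preimages of convex regions in $\R^r$.

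The two directions then go as follows.

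\emph{Only if.} Suppose $N$ is of type $F_m$ and $\chi|_N=0$. Write the half-space $\{\bar\chi\ge 0\}$ as an increasing union of bounded convex sets. A sphere of dimension $k<m$ in $\Gamma^{(m)}_\chi$ is compact, so it lies in some $h^{-1}(B)$. By essential connectivity it bounds a disk in $h^{-1}(B')$ for a slightly larger region $B'$. Translating by elements of $G$ with large positive $\chi$-value, and using $G$-equivariance, we can arrange that $B'$ stays inside $\{\bar\chi\ge -d\}$ for a uniform $d$. This gives $[\chi]\in\Sigma^m(G)$.

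\emph{If.} Assume the whole subsphere $S^{r-1}$ lies in $\Sigma^m(G)$. Since $\Sigma^m(G)$ is open and $S^{r-1}$ is compact, there is a uniform $d$ that works for every $\chi$ in the subsphere. Using these uniform pushing moves, a $k$-sphere lying far out in $\R^r$ can be pushed toward the origin in the direction of its outward normal, losing at most $d$ at each step. Iterating this with a finite cover of $S^{r-1}$ by small caps, the sphere is eventually contracted inside $h^{-1}(B_{t+C})$ for a constant $C$. This yields the essential $(m-1)$-connectivity needed for Brown's criterion.

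The main obstacle is this \emph{if} direction. The pushing must be done in many directions at once, and the individual homotopies have to be glued consistently across the overlaps of the caps. I would handle this by induction on the dimension $k$, using a nerve or Mayer--Vietoris type argument for covers of a ball's complement by half-spaces, in the style of the proof of Bieri--Renz's \cite[Theorem~B]{BiRe88}. Alternatively, I would follow Renz's route: first prove the homological version, Theorem~\ref{thm:homological_BNS} over $\Z$, then handle $\pi_1$ via $\Sigma^2(G)$, using $\Sigma^m(G)=\Sigma^m(G;\Z)\cap\Sigma^2(G)$ together with the Hurewicz theorem.
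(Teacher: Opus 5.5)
The paper does not prove this statement. It imports it as \cite[Satz~C]{Ren88}, the homotopical counterpart of \cite[Theorem~B]{BiRe88}. Your set-up is a reasonable frame: reduction to $G/N\cong\Z^r$, then Brown's criterion for the free $N$-action on $\Gamma^{(m)}$ filtered through $h$. But both directions contain genuine gaps.

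In the \emph{only if} direction, the claim that a sphere $S\subset\Gamma^{(m)}_\chi$ bounds in $h^{-1}(B')$ for a \emph{slightly} (i.e.\ uniformly) larger region is the whole content of that direction. Brown's criterion only yields, for each $t$, some $t'$ with no additive control. So a large sphere lying just above the hyperplane $\bar\chi=0$ could a priori only bound in a region reaching far below it. Translating by $g$ with $\chi(g)\gg 0$ does not repair this. It produces a null-homotopy of the different sphere $gS$, and joining $S$ to $gS$ inside $\Gamma_{\chi\ge -d}$ is a controlled statement of exactly the same kind. The step can be proved by controlled lifting:
\begin{enumerate}
\item $Q$-translation invariance gives a function $f$ such that $k$-spheres ($k<m$) in $h^{-1}(B(p,s))$ bound in $h^{-1}(B(p,f(s)))$ for every $p$.
\item Null-homotope $h\circ S$ by a straight-line homotopy inside the convex hull of $h(S)$, which stays in the half-space up to bounded error.
\item Triangulate this homotopy finely and lift it skeleton by skeleton. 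This gives a null-homotopy of $S$ inside $h^{-1}(\{\bar\chi\ge -R\})$ with $R$ independent of $S$, hence inside $\Gamma_{\chi\ge -d}$.
\end{enumerate}
Alternatively, use a $K(G,1)$ fibred over $T^r$ with fibre a $K(N,1)$ having finite $m$-skeleton.

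In the \emph{if} direction the decisive idea is missing, as you partly acknowledge. The condition that $\Gamma_\chi\hookrightarrow\Gamma_{\chi\ge -d}$ is $\pi_k$-trivial provides no ``pushing move''. It gives null-homotopies controlled from below in the single direction $\chi$, and these may wander arbitrarily far in every other direction. Moreover, openness of $\Sigma^m(G)$ alone does not make $d$ locally uniform, so compactness of $S^{r-1}$ does not yet give a uniform $d$.

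What is needed is the stronger characterization of $[\chi]\in\Sigma^m(G)$ by a $G$-equivariant cellular map $\varphi\colon\Gamma^{(m)}\to\Gamma^{(m)}$ with two properties: it is $G$-homotopic to the identity (hence has uniformly bounded tracks in $\R^r$), and it raises the $\chi$-valuation of every cell by some $\varepsilon>0$. Since there are finitely many $G$-orbits of cells, this amounts to finitely many strict inequalities, which persist for nearby characters. Compactness then gives finitely many maps $\varphi_1,\dots,\varphi_n$ serving all of $\{[\chi]\mid \chi|_N=0\}$. One must still assemble them into a single map contracting $\Gamma^{(m)}$ toward a bounded region of $\R^r$. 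This map is defined cell by cell over the skeleta according to the position of each cell, and corrected on boundaries using the bounded homotopies. That assembly is the heart of the proof, and it is not supplied.

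Your fallback via $\Sigma^m(G)=\Sigma^m(G;\Z)\cap\Sigma^2(G)$ does reduce the case $m\ge 2$ to \cite[Theorem~B]{BiRe88} plus the case $m=2$, since $N$ is of type $F_m$ iff it is finitely presented and of type $\FP_m$. But the homotopical case $m=2$ still requires exactly this kind of argument.
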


%%%%
\subsection{The Thurston norm and fibered faces}
This subsection is devoted to a brief review of the Thurston norm which is a crucial ingredient in the proof of Theorem~\ref{thm:link}.
We refer the reader to \cite{Thu86} and \cite{Kit22}.

Let $Y$ be a compact, connected, oriented $3$-manifold and let $G = \pi_1(Y)$. 
For $\phi \in H^1(Y; \Z)$, define the Thurston norm $x_Y(\phi) \in \Z$ by
\[
x_Y(\phi) = \min\{ \chi_{-}(S) \mid \text{$S$ is a properly embedded surface in $Y$ dual to $\phi$} \}.
\]
Here, $\chi_{-}(S) = \sum_i \max\{-\chi(S_i), 0\}$, where the sum runs over all connected components of $S$.
It is known that $x_Y(\phi)$ extends to a seminorm on $H^1(Y; \R)$.
The set $B_Y = \{\phi \in H^1(Y; \R) \mid x_Y(\phi)\leq 1\}$ is called the \emph{Thurston norm ball}, which is known to be a (possibly non-compact) convex polyhedron in $H^1(Y; \R)$.

Let $\Sigma(Y)$ denote the subset of $S(G)$ consisting of $[\chi]$ such that there exists nowhere vanishing closed $1$-form $\omega$ on $Y$ satisfying $\chi = \omega$ in $H^1(Y; \R) = H^1_{\mathrm{dR}}(Y)$.
Then, Thurston~\cite[Theorem~5]{Thu86} proved the following theorem.

\begin{theorem}
\label{thm:Thu86}
Let $Y$ be a $3$-manifold fibering over a circle whose fiber has negative Euler characteristic.
Then, there exist top-dimensional faces $F_i$ of $B_Y$ such that $\phi \in H^1(Y; \Z)$ is fibered if and only if $\phi \in \bigcup_i \Int \Cone(F_i)$.
Furthermore, $\Sigma(Y) = \bigcup_i \Int \Cone(F_i) /\R_{> 0}$.
\end{theorem}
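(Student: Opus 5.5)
The plan is to follow the foliation-theoretic route: first show that fibered classes are open and that the norm is linear near them, then identify the maximal open cone of fibered classes with the open cone over a top face. Recall the easy direction. By Tischler's theorem, a class $\phi\in H^1(Y;\Z)$ is fibered if and only if it is represented by a nowhere vanishing closed $1$-form $\omega$. The condition ``nowhere vanishing'' is open in the $C^0$-topology on forms. Hence if $\omega$ is nowhere vanishing, so is $\omega+\eta$ for every closed $\eta$ with small harmonic representative. Consequently $\Sigma(Y)$ is open in $S(G)$, the fibered integral classes form an open cone, and the last assertion of the theorem will follow from the first once every class in $\Int\Cone(F_i)$, rational or not, is realized by a nonvanishing form.

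\textbf{Step 1 (norm is linear near a fibered class).} Let $\omega$ be nonvanishing with fibration by fibers $S$, and let $e\in H^2(Y,\partial Y)$ be the relative Euler class of the plane field $\ker\omega$. I would prove two inequalities.
\begin{enumerate}[label=(\roman*)]
\item Thurston's inequality $x_Y(\psi)\ge|\langle e,\psi\rangle|$ for all $\psi$. This holds because any incompressible surface can be isotoped to be in good position with respect to the taut foliation $\ker\omega$, and then one counts tangencies by Poincar\'e--Hopf.
\item Equality $x_Y(\psi)=-\langle e,\psi\rangle$ whenever $\psi$ is represented by a nonvanishing form $\omega'$ $C^0$-close to $\omega$. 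The reason is that the fiber of $\omega'$ is a compact leaf of a taut foliation, hence norm-minimizing. Moreover $\ker\omega'$ is homotopic to $\ker\omega$ as a plane field, so it has the same Euler class.
\end{enumerate}
The hypothesis $\chi(S)<0$ guarantees that $x_Y(\phi)>0$, so these classes lie off the kernel of the seminorm. Thus $x_Y$ is linear on an open cone around $\phi$. Since $B_Y$ is a polyhedron, $\phi$ lies in $\Int\Cone(F)$ for a unique top-dimensional face $F$, namely the face on which $x_Y=-e$.

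\textbf{Step 2 (the whole open cone is fibered).} Let $C\subset\Int\Cone(F)$ be the set of classes represented by nonvanishing closed forms. By Step 1 and openness, $C$ is open and nonempty, and $C$ is convex since a sum of forms positive on a common transverse flow is still positive. It remains to show that $C$ is closed in $\Int\Cone(F)$. Here I would use Fried's description. Fix the suspension flow $\Phi$ of $\omega$. A class $\psi$ is represented by a form positive on $\Phi$ if and only if $\psi$ is positive on the compact set $D$ of homology directions of $\Phi$, meaning the normalized limits of long closed orbits. I would also try to show that $-e$ is a supporting functional exactly dual to this cone, i.e.\ that $x_Y(\psi)=-\langle e,\psi\rangle$ forces $\psi>0$ on $D$ as long as $\psi$ is interior to the face.

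\textbf{Main obstacle.} The hardest step is proving that $\psi\in\Int\Cone(F)$ with $\psi(z)\le 0$ for some $z\in D$ cannot exist. My first attempt is by contradiction. If $\psi$ vanishes on some homology direction, take a norm-minimizing surface $T$ dual to $\psi$. Since $\psi$ is interior to the face, $\psi\pm\epsilon\eta$ still satisfies $x_Y=-e$ for small $\eta$. I would then isotope $T$ to be transverse to $\Phi$ except at saddle tangencies, and count these tangencies using the equality $\chi_-(T)=-\langle e,[T]\rangle$. This count should force $T$ to be everywhere transverse to $\Phi$, making $T$ a cross-section, hence $\psi\in C$. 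Verifying this tangency count, which is the heart of Thurston's original argument, is where I expect the real work to be. Finally, taking the union over the finitely many faces obtained from the fibered classes gives the faces $F_i$. Passing to rays gives $\Sigma(Y)=\bigcup_i\Int\Cone(F_i)/\R_{>0}$.
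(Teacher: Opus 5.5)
The paper does not prove this statement; it quotes Thurston's Theorem~5~\cite{Thu86}. Your Step~1 is exactly Thurston's argument and is fine: openness of nonsingular classes, the inequality $x_Y\ge|\langle e,\cdot\rangle|$ via Roussarie--Thurston position, equality at fibered classes, hence linearity of $x_Y$ near a fibered class.

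Step~2, however, has a genuine gap. The tangency count cannot force transversality. Put a norm-minimizing $T$ dual to $\psi$ in Roussarie--Thurston position relative to $\ker\omega$, so it has only saddle tangencies, $s_+$ coherently and $s_-$ incoherently co-oriented. Then $\chi_-(T)=s_++s_-$ and $-\langle e,[T]\rangle=s_+-s_-$. So the equality $x_Y(\psi)=-\langle e,\psi\rangle$ yields only $s_-=0$, and there are still exactly $-\chi(T)>0$ tangencies. That equality holds on the whole \emph{closed} cone, so neither interiority nor the perturbation $\psi\pm\epsilon\eta$ enters. Also, ``transverse to $\Phi$ except at saddles'' is not a generic position for a flow, whose tangency locus with a surface is $1$-dimensional.

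Worse, your target is false for a general $\Phi$: namely, that every $\psi\in\Int\Cone(F)$ is positive on the homology directions $D$ of a suspension flow of $\omega$. Take $Y=S\times S^1$ with $S$ closed of genus $2$. Then $x_Y(a,n)=2|n|$ and the fibered cone is $\{n>0\}$. Let $\Phi$ be transverse to the fibers $S\times\{\theta\}$ with return map a point-push along a nonseparating loop $\gamma$. The orbit of the pushed point has class $([\gamma],1)$, on which the fibered class $(a,1)$ with $a([\gamma])=-2$ is negative. Making the Fried route work would require a carefully chosen flow (e.g.\ a pseudo-Anosov one, via Nielsen--Thurston theory), a much heavier detour. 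Your convexity claim for $C$ has the same defect, since no common transverse flow is available a priori, though the open--closed argument does not need it.

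The missing idea is how to exploit $s_-=0$. Take a thin product neighborhood $T\times[-1,1]$ with normal coordinate $r$. Nondegeneracy of the saddles and $s_-=0$ give that at every point, $\omega$ and $dr$ are either linearly independent or positive multiples of each other. Hence for a bump form $\eta_T=f'(r)\,dr$ with $f'\ge0$, dual to $T$, the form $\omega+t\eta_T$ is nowhere zero for every $t\ge0$. So $[\omega]+t\psi$ is nonsingular for all $t\ge0$ and all integral $\psi$ in the closed cone on $F$.

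Interiority is used only now. For integral $\psi\in\Int\Cone(F)$ and small rational $\epsilon>0$, some integral multiple $N(\psi-\epsilon[\omega])$ still lies in the cone. Taking $t=1/(N\epsilon)$ shows that $\psi/\epsilon$, hence $\psi$, is nonsingular. For real classes, write $\psi-\epsilon[\omega]$ as a positive combination of integral classes in the cone and iterate the construction. Each $\omega+t\eta_T$ is joined to $\omega$ through nonsingular forms, so it has the same Euler class and the argument repeats. This replaces your Fried-based closedness step entirely.
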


Here, $\Int \Cone(F_i)$ denotes the interior of the cone on the face $F_i$.
See also \cite[Theorem~5.2.8]{Kit22}.
Furthermore, we recall from \cite{BNS87} a result which connects $\Sigma(Y)$ and the BNS invariant.

\begin{theorem}[{\cite[Theorem~E]{BNS87}}]
\label{thm:BNS_E}
Let $Y$ be a compact connected $3$-manifold.
Then, $\Sigma^1(\pi_1(Y)) = \Sigma(Y)$.
\end{theorem}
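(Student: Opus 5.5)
The statement immediately above is Theorem~\ref{thm:BNS_E}, quoted from \cite{BNS87}. Its proof shows two inclusions. Throughout I identify $H^1(Y;\R)$ with $\Hom(\pi_1(Y),\R)$ and use de Rham cohomology for closed $1$-forms.

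\emph{The inclusion $\Sigma(Y)\subset\Sigma^1(\pi_1(Y))$.} Take a nowhere vanishing closed $1$-form $\omega$ representing $\chi$. Tischler's argument perturbs $\omega$ to a nowhere vanishing form $\omega'$ with rational periods, arbitrarily close to $\omega$. A rational nowhere vanishing class fibres $Y$ over $S^1$, with compact fibre $F$. The kernel of the induced character is then the image of $\pi_1(F)$, which is finitely generated, so the rational class $[\omega']$ lies in $\Sigma^1$ by Theorem~\ref{thm:homological_BNS} with $m=1$. This alone does not handle irrational $\chi$. For those I would use the openness of $\Sigma^1$ together with openness of $\Sigma(Y)$, or better argue directly. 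The flow lines of a vector field $v$ with $\omega(v)>0$ give a $\chi$-increasing deformation of the universal cover $\widetilde Y$. Applying this to paths in the $1$-skeleton shows that the Cayley graph restricted to $G_\chi$ is connected, which is the geometric criterion for $\Sigma^1$.

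\emph{The inclusion $\Sigma^1(\pi_1(Y))\subset\Sigma(Y)$.} First reduce to rational characters. $\Sigma^1$ is open, and I expect $\Sigma(Y)$ to be a union of open cones (Theorem~\ref{thm:Thu86} describes it as the interior of cones on faces). So it suffices to show that every rational $[\chi]\in\Sigma^1$ is represented by a nowhere vanishing form, plus a closure argument. For such a $\chi$, Theorem~\ref{thm:homological_BNS} applies: if $[\chi]$ and $[-\chi]$ both lie in $\Sigma^1$, the kernel $N=\Ker\chi\cong$ a finitely generated normal subgroup with quotient $\Z$. Stallings' fibration theorem then makes $Y$ (assumed irreducible, or after handling the prime decomposition) fibre over $S^1$ with fibre group $N$, and the fibration gives the required form. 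So one needs symmetry: $[\chi]\in\Sigma^1$ implies $[-\chi]\in\Sigma^1$ for $3$-manifold groups. I would obtain this from Poincaré duality, using that finite generation of $\Ker\chi$ is detected by Novikov homology $H_1(Y;\widehat{\Z G}_{\chi})$. Duality exchanges the $\chi$ and $-\chi$ Novikov completions. This yields symmetry once $H_1$ vanishing is known to characterize $\Sigma^1$ in this setting.

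\emph{Main obstacle.} The hardest steps are the symmetry/Stallings step and the irreducibility reduction. If $Y$ is reducible, I would show both sides are empty unless $Y$ is $S^1\times S^2$-like, because a free product with nontrivial factors has empty $\Sigma^1$. One also must handle the passage from rational to irrational characters carefully, since Thurston's face structure is only available when some class fibres. I would first try the flow/Morse-theoretic route via Novikov homology, since it treats irrational characters uniformly.
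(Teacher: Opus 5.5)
The paper does not prove this statement; it quotes \cite[Theorem~E]{BNS87} and only remarks that the Poincar\'e conjecture removes the fake $3$-cell hypothesis. So your proposal has to stand on its own. The inclusion $\Sigma(Y)\subset\Sigma^1(\pi_1(Y))$ is fine in outline: Tischler handles rational classes and the gradient-like flow handles irrational ones. The reverse inclusion, however, has a genuine gap at the rational step.

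\textbf{The symmetry step.} For rational $[\chi]\in\Sigma^1$ you correctly need $[-\chi]\in\Sigma^1$ to conclude that $\Ker\chi$ is finitely generated. Poincar\'e duality does not give this. Sikorav's characterization of $\Sigma^1$ by vanishing of Novikov $H_1$ holds for every finitely generated group, so it is not the issue. The issue is that duality identifies $H_1(Y;\widehat{\Z G}_{-\chi})$ with the \emph{cohomology} $H^2(Y;\widehat{\Z G}_{\chi})$ (say for closed $Y$), not with homology for $\chi$. Over $\Lambda=\widehat{\Z G}_\chi$, which is not a skew field (already for $G=\Z$ it is $\Z((t))$), the vanishing of $H_0(Y;\Lambda)$ and $H_1(Y;\Lambda)$ only makes the bottom of the chain complex split. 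That gives $H^2(Y;\Lambda)\cong\Hom_\Lambda(H_2(Y;\Lambda),\Lambda)$, and nothing in your argument controls $H_2(Y;\Lambda)$. Concluding vanishing would need a rank or Euler-characteristic theory over the Novikov ring, such as von Neumann finiteness results or an embedding into a skew field as in Kielak's approach. That is heavy input you have not supplied. Also, in \cite{BNS87} the symmetry of $\Sigma^1$ for $3$-manifold groups is Corollary~F, deduced \emph{from} Theorem~E, so using it as an input is circular unless you prove it independently.

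\textbf{What is missing.} You need a genuinely $3$-dimensional argument that one-sided information already forces fibering. One plausible route, sketched here rather than checked in full (e.g.\ when the dual surface is disconnected), goes as follows. For discrete $\chi$, $[\chi]\in\Sigma^1$ makes $G$ an ascending HNN extension over a finitely generated base. Hence every loop in the infinite cyclic cover can be pushed into any neighbourhood of one of its two ends. Cutting an irreducible $Y$ along an incompressible surface $S$ dual to $\chi$, this should force one copy of $S$ to carry $\pi_1$ of the cut manifold. Hempel's product criterion \cite{Hem04} would then make $Y$ cut along $S$ homeomorphic to $S\times[0,1]$, so $\chi$ fibers. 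Symmetry would then be a consequence rather than an input.

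\textbf{Two smaller points.} First, passing from rational to irrational characters needs more than openness of both sets, since an open set is not determined by its rational points. Use that the Thurston ball is a rational polyhedron: an irrational point outside the open fibered cones is then a limit of rational non-fibered classes. Second, for $\partial Y\neq\emptyset$ your Tischler and flow steps need $\omega|_{\partial Y}$ to be nowhere vanishing. Without that, take a planar pair of pants $P$ and pull back $dx+\epsilon\,d\theta$ ($\theta$ the angle about one hole, $\epsilon$ small) to $P\times[0,1]$. This is a nowhere vanishing closed form with non-zero class on a genus-$2$ handlebody, yet $\Sigma^1$ of the free group $F_2$ is empty. So $\Sigma(Y)$ must be read with this boundary condition.
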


Note that, in light of the resolution of the Poincar\'{e} conjecture, the statement has been simplified.
Combining Theorems~\ref{thm:Thu86} and \ref{thm:BNS_E}, one obtains the next result.

\begin{corollary}
\label{cor:Thu86}
Let $Y$ be a $3$-manifold fibering over a circle whose fiber has negative Euler characteristic.
If $\dim H^1(Y; \R)\geq 2$, then $\Sigma^1(\pi_1(Y)) \subsetneq S(\pi_1(Y))$.
\end{corollary}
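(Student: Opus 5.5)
The plan is to combine Theorems~\ref{thm:Thu86} and \ref{thm:BNS_E} and then use the geometry of the Thurston norm ball. Let $G=\pi_1(Y)$ and let $\phi_0\in H^1(Y;\Z)$ be the class of a fibration $Y\to S^1$ whose fiber has negative Euler characteristic.

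First, I will check that the norm ball $B_Y$ is compact, i.e.\ that $x_Y$ is a genuine norm. Thurston's Theorem~\ref{thm:Thu86} is really stated for norms. If $x_Y$ were only a seminorm, I would have to argue that the cones $\Cone(F_i)$ still cannot cover everything. The fibered class already shows that $Y$ is irreducible with incompressible fiber. A nonzero class of norm zero would be dual to a union of tori and annuli, and such a class can never lie in the interior of a fibered cone, since fibered classes have norm $-\chi(\text{fiber})>0$. So I will simply keep track of the vanishing locus of $x_Y$ without assuming it is trivial.

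The key step is to produce a class $\psi$ with $[\psi]\notin\Sigma(Y)$. Since $\dim H^1(Y;\R)\ge2$ and $x_Y$ is a seminorm that is linear on each cone over a top-dimensional face, the union of the open cones $\bigcup_i\Int\Cone(F_i)$ is an open subset of $H^1(Y;\R)\setminus\{0\}$ that avoids the cones over the lower-dimensional faces. Two cases arise.
\begin{itemize}
\item If $B_Y$ is compact, its boundary is a sphere of dimension $\dim H^1-1\ge1$, which is connected. It is not the disjoint union of open top-dimensional faces, because faces of codimension at least one in $\partial B_Y$ exist once $\dim\ge2$: a polytope of dimension at least $2$ has proper faces of lower dimension. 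Choosing $\psi$ on the cone over such a face, Theorem~\ref{thm:Thu86} gives $[\psi]\notin\Sigma(Y)$.
\item If $x_Y$ has a nonzero kernel, any $\psi\ne0$ with $x_Y(\psi)=0$ lies in no fibered cone, since the norm is positive on those cones.
\end{itemize}
Theorem~\ref{thm:BNS_E} then gives $[\psi]\notin\Sigma^1(G)$, while $[\psi]\in S(G)$. This proves the strict inclusion.

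I expect the main obstacle to be justifying carefully that lower-dimensional faces (or kernel directions) exist and are genuinely excluded from $\Sigma(Y)$. In particular, I need the statement ``$\Sigma(Y)=\bigcup_i\Int\Cone(F_i)/\R_{>0}$'' to apply to real classes on face boundaries and not only to integral ones. This is exactly what the last sentence of Theorem~\ref{thm:Thu86} provides, so I would lean on it directly. The rest is elementary convex geometry: in dimension $\ge2$, the boundary of a polytope is never the disjoint union of its open facets.
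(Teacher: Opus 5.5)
Your argument is correct and is essentially the paper's. You reduce via Theorems~\ref{thm:Thu86} and \ref{thm:BNS_E} to showing that the open cones over the fibered faces miss some nonzero class, and you find one either on the cone over a lower-dimensional face of $B_Y$ or in the kernel of $x_Y$. The paper splits cases by whether two distinct facets meet (if not, $B_Y$ is a slab between two parallel facets and the kernel directions are missed) rather than by compactness, but the excluded classes are the same.
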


\begin{proof}
It suffices to show that $\bigcup_i \Int \Cone(F_i) \subsetneq H^1(Y; \R)\setminus\{0\}$, which is implicitly established in the proof of \cite[Corollary in Section~3]{Thu86}.
If two distinct faces intersect, then the intersection is not contained in the above union.
If no two distinct faces intersect, then $B_Y$ must be bounded by exactly two parallel facets, and thus the union of the corresponding open cones is strictly smaller than $H^1(Y; \R)\setminus\{0\}$.
\end{proof}

%%%%%%%%%
\section{Proofs of main theorems}
\label{sec:Proofs}

%%%
\subsection{Links in $S^3$}
\label{subsec:link}
We first prepare some results for proving Theorem~\ref{thm:link}.

\begin{proposition}
\label{prop:Q}
Let $Y$ be a compact, connected, orientable, irreducible $3$-manifold whose fundamental group $G=\pi_1(Y)$ is infinite.
Then, $\Sigma^m(G)\cap S\Q(G) = \Sigma(Y)\cap S\Q(G)$ for $m\geq 2$.
The same holds for $\Sigma^m(G(K); R)$, where $R$ is a commutative ring with unity.
\end{proposition}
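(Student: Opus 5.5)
The argument is a two-sided inclusion. The chain
\[
\Sigma^m(G)\subset \Sigma^2(G)\subset\Sigma^1(G)=\Sigma(Y)
\]
holds by monotonicity of the invariants in $m$ and Theorem~\ref{thm:BNS_E}. So it suffices to show that every rational class $[\chi]\in\Sigma(Y)\cap S\Q(G)$ lies in $\Sigma^m(G)$ for all $m$. The homological case then follows from $\Sigma^m(G)\subset\Sigma^m(G;R)\subset\Sigma^1(G;R)=\Sigma^1(G)$. The last equality is the standard fact that $\Sigma^1(G;R)=\Sigma^1(G)$ for any $R$; if necessary it can be checked directly, since $\Sigma^1(G;R)$ also detects finite generation of kernels.

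For the reverse inclusion, take a rational character $\chi$ and rescale it to a primitive class $\chi\colon G\to\Z$, which is surjective. If $[\chi]\in\Sigma(Y)$, then by Tischler's theorem a nowhere vanishing closed $1$-form representing $\chi$ with rational periods can be perturbed into a fibration $Y\to S^1$ inducing $\chi$. Thus $Y$ fibers over $S^1$ with compact fiber $F$, and $\Ker\chi=\pi_1(F)$.

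Next I show that $\pi_1(F)$ is of type $F_\infty$. Since $Y$ is irreducible with infinite fundamental group, $Y$ is aspherical; this uses the sphere theorem, the Poincaré conjecture, and the fact that irreducible compact $3$-manifolds with infinite $\pi_1$ have $\pi_2=0$ and contractible universal cover. Hence $G$ is of type $F$ (indeed $Y$ is a finite $K(G,1)$), and $G$ is of type $F_\infty$. Because $Y$ is aspherical, the infinite cyclic cover $F\times\R$ is aspherical, so $F$ is a compact aspherical surface and $\pi_1(F)$ has the finite $K(\pi,1)$ space $F$. (Here $F$ cannot be $S^2$ or $D^2$ with $\pi_1(Y)$ infinite in a problematic way, but in any case compact surfaces have finitely presented groups of type $F_\infty$, even when $F$ is a disk.)

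Now apply Theorem~\ref{thm:homotopical_BNS} with $N=\Ker\chi\supset[G,G]$ and $m_0$ arbitrary. The set $\{[\psi]\mid\psi|_N=0\}$ is $\{[\chi],[-\chi]\}$, because $G/N\cong\Z$. Since $N$ is of type $F_m$, both classes lie in $\Sigma^m(G)$, and in particular $[\chi]\in\Sigma^m(G)$.

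The main obstacle is justifying the asphericity and type-$F$ claims cleanly. The key points are that irreducibility excludes $\pi_2$ (via the sphere theorem, $\pi_2(Y)=0$ for orientable irreducible $Y$) and that infinite $\pi_1$ then gives a contractible universal cover by Hurewicz and the noncompactness of the cover. A secondary point is passing from a rational class in $\Sigma(Y)$ to an honest fibration, which Tischler/Thurston provides.
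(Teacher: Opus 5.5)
Your proof is correct and follows the paper's argument essentially step for step. You get $\subset$ from monotonicity and Theorem~\ref{thm:BNS_E}, and $\supset$ from Tischler's fibration, $\Ker\chi=\pi_1(F)$ being of type $F_\infty$, asphericity of $Y$ via the sphere theorem and Hurewicz, and Theorem~\ref{thm:homotopical_BNS}, with the homological case deduced from $\Sigma^m(G)\subset\Sigma^m(G;R)$. Your extra care (rescaling to a primitive class so the fiber is connected, and checking both inclusions in the homological case) is welcome, though the appeal to the Poincar\'e conjecture is unnecessary and no perturbation is needed once the class is integral.
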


\begin{proof}
The inclusion $\subset$ follows from Theorem~\ref{thm:BNS_E}.
To show $\supset$, let $[\chi]\in \Sigma(Y)\cap S\Q(G)$ and we may assume $\chi \colon G \to \Z$.
Then, it follows from \cite{Tis70} that $\chi$ is represented by a smooth fibration, that is, there exists $f\colon Y\to S^1$ such that $f_\ast = \chi$.
Let $\theta_0$ be a regular value of $f$.
Then, $F=f^{-1}(\theta_0)$ is a compact surface in $Y$.
Using the exact sequence $\{1\}\to \pi_1(F) \to \pi_1(Y) \xrightarrow{\chi} \pi_1(S^1)$, we have $\pi_1(F) = \Ker\chi$, and thus $\Ker\chi$ is of type $F_\infty$.

Now, by Theorem~\ref{thm:homotopical_BNS}, it suffices to see that $G$ is of type $F_\infty$.
Since $Y$ is irreducible, the sphere theorem implies $\pi_2(Y)=0$.
It follows from $|G|= \infty$ that $H_3(\widetilde{Y}; \Z)=0$ for the universal cover $\widetilde{Y}$ of $Y$.
Hence, the Hurewicz theorem shows that $Y$ is a $K(G,1)$ space, which completes the proof.

Finally, the same holds for $\Sigma^m(G(K); R)$ since it contains $\Sigma^m(G(K))$ by \cite[Korollar~1.6]{Ren88}.
\end{proof}

\begin{lemma}
\label{lem:fg}
Let $N$ be a normal subgroup of a group $G$ containing $[G,G]$.
If $G$ and $[G,G]$ are finitely generated, then $N$ is finitely generated.
\end{lemma}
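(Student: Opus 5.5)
The idea is to realize $N$ as an extension of two finitely generated groups. Since $[G,G]\subset N$, the subgroup $N$ is normal in $G$, and we have the short exact sequence
\[
1 \to [G,G] \to N \to N/[G,G] \to 1.
\]
An extension of a finitely generated group by a finitely generated group is finitely generated: lift generators of the quotient and add generators of the kernel. So it suffices to check that both ends of the sequence are finitely generated.

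The kernel $[G,G]$ is finitely generated by hypothesis. For the quotient, note that $N/[G,G]$ is a subgroup of the abelianization $G/[G,G]=H_1(G;\Z)$. Since $G$ is finitely generated, this abelianization is a finitely generated abelian group. Every subgroup of a finitely generated abelian group is finitely generated, because $\Z$ is Noetherian and so finitely generated $\Z$-modules are Noetherian. Hence $N/[G,G]$ is finitely generated.

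Concretely, choose finitely many $n_1,\dots,n_k\in N$ whose images generate $N/[G,G]$, and a finite generating set $c_1,\dots,c_l$ of $[G,G]$. Take any $n\in N$. We can write $n = w(n_1,\dots,n_k)\cdot c$ for some word $w$ and some $c\in[G,G]$, and then $c$ is a word in the $c_j$. So $\{n_i\}\cup\{c_j\}$ generates $N$.

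There is no real obstacle here. The only point needing care is the Noetherian property of finitely generated abelian groups, which is standard.
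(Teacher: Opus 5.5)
Your proof is correct and is the same as the paper's: both use the extension $1 \to [G,G] \to N \to N/[G,G] \to 1$ with finitely generated kernel and quotient. You additionally justify that $N/[G,G]$ is finitely generated because it is a subgroup of the finitely generated abelian group $G/[G,G]$, a step the paper leaves implicit.
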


\begin{proof}
Since $[G,G]$ and $N/[G,G]$ are finitely generated, the exact sequence $1 \to [G,G] \to N \to N/[G,G] \to 1$ implies that $N$ is finitely generated.
\end{proof}

\begin{proof}[Proof of Theorem~\ref{thm:link}]
Let $L$ be a link in $S^3$ with at least two components.
If $L$ is a Hopf link, then $G(L)\cong \Z^2$, and thus $\Sigma^1(G(L)) = S(G(L))$.
Suppose $L$ is not a Hopf link.
There are two cases: (I) $L$ is non-fibered and (II) $L$ is fibered.

In the case (I), it follows from \cite{Sta61} that the kernel of the homomorphism $G(L)\to \Z$ sending meridians to $1$ is not finitely generated.
Then, by Lemma~\ref{lem:fg}, $[G(L),G(L)]$ is not finitely generated, and thus $\Sigma^1(G(L)) \subsetneq S(G(L))$ by Theorem~\ref{thm:homotopical_BNS}.

In the case (II), let $F$ be a fiber surface.
If $\chi(F)<0$, then Corollary~\ref{cor:Thu86} implies $\Sigma^1(G(L)) \subsetneq S(G(L))$.
If $\chi(F)\geq 0$, then $F$ is homeomorphic to an annulus, and the monodromy must be the $\pm 1$ twist along the core since $H_1(S^3; \Z) = 0$.
This contradicts the assumption that $L$ is not a Hopf link.

To prove the latter part of the statement, we assume $L$ is non-split, that is, every $2$-sphere in the exterior $E(L)$ bounds a $3$-ball.
Then, $E(L)$ is an irreducible $3$-manifold, and hence Proposition~\ref{prop:Q} completes the proof.
\end{proof}

\subsection{$2$-knots in $S^4$}
\label{subsec:2-knot}
In this subsection, we will use the fact shown in \cite{Yaj69} that, for a finitely presentable group $G$, there exists a ribbon $2$-knot whose knot group is isomorphic to $G$ if and only if $G^\ab \cong \Z$ and $G$ has a Wirtinger presentation of deficiency one.
For integers $m$ and $n$, let $\BS(m,n) = \ang{x, t \mid t x^m t^{-1} x^{-n}}$ denote the Baumslag-Solitar group.

\begin{proof}[Proof of Proposition~\ref{prop:BS12}]
By the above fact, it suffices to show that $\BS(1, 2)^\ab \cong \Z$ and that $\BS(1,2)$ has a Wirtinger presentation of deficiency one.
First, one can see that $\BS(1, 2)^\ab$ is the infinite cyclic group generated by $t$.
Next, let $y = xtx^{-1}$.
We then have $t x t^{-1} x^{-2} = t y^{-1} x^{-1}$, and thus $\BS(1, 2) \cong \ang{y, t \mid y=(ty^{-1}) t (ty^{-1})^{-1}}$.

Finally, note that $\Sigma^1(\BS(1, 2)) = \{[-\ab]\}$ is well known.
See, for example, \cite[Section~A2.1a]{Str13} and \cite[Section~18.3.B]{Geo08}.
\end{proof}

We recall basic facts about twist-spun knots from \cite{Zee65}.
For a knot $K$ in $S^3$ and an integer $k$, one can construct a $2$-knot $K^k$ in $S^4$ called the \emph{$k$-twist-spun knot}.
It is known that, if $k\neq 0$, $K^k$ is a fibered knot whose fiber is the $k$-fold branched cover $\Sigma_k(K)$ of $S^3$ branched over $K$ with an open $3$-ball removed.
In this case, we have an exact sequence $1 \to \pi_1(\Sigma_k(K)) \to G(K^k) \to \Z \to 0$.

\begin{proof}[Proof of Theorem~\ref{thm:twist_spun}]
Let $G = G(K^k)$.
By Theorems~\ref{thm:homological_BNS} and \ref{thm:homotopical_BNS}, it suffices to show that $G'=[G, G]$ is of type $F_\infty$.
If $G'$ is finite, then it is of type $F_\infty$ by \cite[Corollary~7.2.5]{Geo08}.
Assume $G'$ is infinite.
Since $K$ is a prime knot, the equivariant sphere theorem shows that $\Sigma_k(K)$ is irreducible (see \cite{Dun85}).
Then, $\Sigma_k(K)$ is a $K(G', 1)$ space by the same argument as the proof of Proposition~\ref{prop:Q}, and thus $G'$ is of type $F_\infty$.
\end{proof}

\begin{proof}[Proof of Corollary~\ref{cor:Zee65}]
In \cite{Zee65}, it is shown that the knot group $G(K^5)$ is isomorphic to $2I\times \Z$, where $2I$ denotes the binary icosahedral group.
Since $2I\times \Z$ contains $\Z$ as a finite index subgroup, it is of type $F_\infty$ by \cite[Corollary~7.2.4]{Geo08}.
Therefore, Theorem~\ref{thm:twist_spun} completes the proof.
\end{proof}

%%%%
\subsection{Difference between knot groups for classical knots and $2$-knots.}
\label{subsec:difference}

As seen in the previous subsection, knot groups of $2$-knots and those of classical knots have a different property in light of the BNS invariant.
We finally observe another group-theoretic difference between them.

Let $\defy(G)$ denote the deficiency of a group $G$, that is, the minimum of $s-r$, where $G$ admits a presentation with $s$ generators and $r$ relators.
Let $\cd(G)$ denote the cohomological dimension of $G$.

\begin{lemma}[{\cite[Lemma~1]{Hil13}}]
\label{lem:Hil13}
Let $G$ be a finitely presentable group such that $[G,G]$ is finitely generated and $G^\ab\cong \Z$.
Then, the following are equivalent.
\begin{enumerate}[label=\textup{(\roman*)}]
\item $[G,G]$ is a free group.
\item $\cd(G)\leq 2$.
\item $\defy(G)=1$.
\end{enumerate}
\end{lemma}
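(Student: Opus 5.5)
We prove the cycle of implications (i) $\Rightarrow$ (ii) $\Rightarrow$ (iii) $\Rightarrow$ (i). Throughout we write $G'=[G,G]$, so that $G \cong G' \rtimes \Z$ because $G^\ab\cong\Z$ is free.

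\emph{(i) $\Rightarrow$ (ii).} Suppose $G'$ is free of finite rank $r$ with basis $x_1,\dots,x_r$, and let $t$ map to a generator of $G^\ab$. Then $G$ is the ascending HNN extension, in fact a semidirect product, $F_r\rtimes_\phi\Z$. Its presentation
\[
\ang{x_1,\dots,x_r,t \mid t x_i t^{-1}=\phi(x_i)\ (1\le i\le r)}
\]
is the presentation $2$-complex of a mapping torus of a graph. That complex is aspherical, since it is the mapping torus of a homotopy equivalence of a $K(F_r,1)$. Hence $\cd(G)\le 2$. Alternatively, one can use subadditivity $\cd(G)\le\cd(G')+\cd(\Z)=2$.

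\emph{(i) $\Rightarrow$ (iii).} The presentation above has $r+1$ generators and $r$ relators, so $\defy(G)\ge 1$. For the reverse bound, use the standard inequality $\defy(G)\le \operatorname{rank} H_1(G;\Q)-\dim H_2(G;\Q)\le 1$. Hence $\defy(G)=1$, so (iii) follows directly from (i).

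\emph{(ii) $\Rightarrow$ (i).} Since $G'$ is finitely generated, a theorem of Bieri (or Strebel's result on normal subgroups of groups of cohomological dimension $2$) applies: $G$ is finitely presented with $\cd(G)\le 2$, and $G'$ is a finitely generated normal subgroup with infinite quotient $\Z$. These force $G'$ to be free. One route is Bieri's theorem that such a normal subgroup $N$ with $G/N$ of cohomological dimension $1$ satisfies $\cd N\le 1$, hence is free by Stallings–Swan. I would invoke this as the main tool.

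\emph{(iii) $\Rightarrow$ (i).} Take a presentation of deficiency one. It has $s$ generators and $s-1$ relators. Pass to the infinite cyclic cover of the presentation $2$-complex $X$, and use that $G'$ is finitely generated. Since the Alexander module $H_1(G';\Q)$ is finite dimensional over $\Q$, the chain complex of $\widetilde X^{\,\ab}$ over $\Q[t^{\pm1}]$ shows that $H_2$ of the infinite cyclic cover vanishes, by a rank count. Hence $\widetilde X^{\,\ab}$ behaves like a $2$-complex of a $\cd\le 2$ group. One then shows $X$ is aspherical, giving (ii), or argues directly via the Brown/BNS characterization that $G$ is a mapping torus with free fiber.

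The main obstacle is exactly this last step. Deriving asphericity from deficiency is genuinely delicate in general, since it touches Whitehead-type issues. The rescue is the finite generation of $G'$: combined with $\Sigma^1$-type arguments (both $\pm\ab$ lie in $\Sigma^1(G)$), it makes the relative Euler characteristic computation $\chi(X)=1-s+(s-1)=0$ match that of a graph mapping torus. I would try to cite Hillman's argument for this step, as the paper does via \cite{Hil13}.
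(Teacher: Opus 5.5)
The paper gives no proof of this lemma; it only cites Hillman. So your proposal has to stand on its own. Your implications (i)$\Rightarrow$(ii) and (i)$\Rightarrow$(iii) are correct, but (ii)$\Rightarrow$(i) rests on a false statement.

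A finitely generated normal subgroup $N$ with $G/N\cong\Z$ in a finitely presented group of cohomological dimension $2$ need not be free. Take $G=F_2\times F_2$ and $N$ the kernel of the character sending all four generators to $1$: $N$ is finitely generated but contains $\Z^2$. Bieri's result needs $N$ of type $\FP_2$, because if $N$ is $\FP_2$ with $\cd N=2$ then $\cd(N\rtimes\Z)=3$. Your argument neither produces $\FP_2$ for $G'$ nor uses $G^{\ab}\cong\Z$.

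In fact (ii) alone does not suffice even with $G^{\ab}\cong\Z$. Under (ii), a length-two finite free resolution gives $H_2(G';\Q)\cong\Q[t^{\pm1}]^{\chi(G)}$ with $\chi(G)=\beta_2(G)$, so (i) forces $\beta_2(G)=0$. The case $\beta_2>0$ does occur. Amalgamate two copies of $F_2\times F_2$ along a free group of rank $7$, embedded in each as the graph of a homomorphism. One can choose the embeddings so that its $H_1$ maps onto the kernel of the sum map $\Z^8\to\Z$. Then:
\begin{enumerate}[label=\textup{(\alph*)}]
\item $G^{\ab}\cong\Z$ and $\cd G=2$;
\item the abelianization is nonzero on the edge group, so by Bass--Serre theory $G'$ is generated by two copies of the finitely generated $N$ above plus finitely many stable letters;
\item yet $G'\supset\Z^2$, so $G'$ is not free.
\end{enumerate}
So this direction genuinely needs the extra input $H_2(G;\Q)=0$, which holds for the $2$-knot groups to which the paper applies the lemma.

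For (iii)$\Rightarrow$(i), your rank count over $\Q[t^{\pm1}]$ is correct, but it only controls the infinite cyclic cover. It says nothing about $\pi_2(X)=H_2(\widetilde X)$ or about finiteness properties of $G'$, and you leave the key step to the citation. The missing idea is to run the same count over the Novikov rings $\widehat{\Z G}_{\pm\ab}$.
\begin{enumerate}[label=\textup{(\arabic*)}]
\item Since $\pm\ab\in\Sigma^1(G)$, the complex $\widehat{\Z G}_{\pm\ab}\otimes_{\Z G}C_*(\widetilde X)$ is exact in degrees $0$ and $1$. This yields a surjection $\widehat{\Z G}^{\,s-1}\oplus\widehat{\Z G}\twoheadrightarrow\widehat{\Z G}^{\,s}$.
\item By Kochloukova's theorem, matrix rings over Novikov rings of discrete characters are directly finite, so this surjection is injective. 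Hence the Novikov $H_2$ vanishes as well.
\item Since $\pi_2(X)$ embeds in that $H_2$, we get $\pi_2(X)=0$ and $\cd G\le 2$.
\item By the Novikov-homology criterion, $\pm\ab\in\Sigma^2(G;\Z)$, so $G'$ is of type $\FP_2$ by Theorem~\ref{thm:homological_BNS}.
\item The correct form of Bieri's argument now gives $\cd G'\le 1$, and $G'$ is free by Stallings--Swan.
\end{enumerate}
The same argument, applied to a finite free resolution of length two with Euler characteristic $\chi(G)=\beta_2(G)=0$, proves (ii)$\Rightarrow$(i) once $H_2(G;\Q)=0$ is added as a hypothesis.
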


\begin{proposition}
Let $K$ be a prime knot in $S^3$ and $G$ the fundamental group of the exterior of the $k$-twist-spun knot $K^k$ in $S^4$ with $|k| \geq 2$.
Then, the geometric dimension of $G$ is greater than or equal to $3$ and $\defy(G)\leq 0$.
\end{proposition}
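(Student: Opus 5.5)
Both assertions follow from Lemma~\ref{lem:Hil13}, once we know that $[G,G]$ is not free. Since $K^k$ is fibered for $k\neq 0$, the exact sequence $1\to \pi_1(\Sigma_k(K))\to G\to \Z\to 0$ holds and $G^\ab\cong\Z$. Because the quotient is $\Z=G^\ab$, the kernel is exactly $G'=[G,G]=\pi_1(\Sigma_k(K))$. This group is finitely generated, being the fundamental group of a compact $3$-manifold, and $G$ is finitely presentable, being the fundamental group of a compact $4$-manifold. So the hypotheses of Lemma~\ref{lem:Hil13} hold. It therefore suffices to show that $G'$ is not free. Then (ii) fails, so $\cd(G)\geq 3$; since the geometric dimension is at least the cohomological dimension, it is also at least $3$. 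And (iii) fails, so $\defy(G)\neq 1$. We also need $\defy(G)\le 1$, which holds for any group with $G^\ab\cong\Z$: a presentation with $s$ generators and $r$ relators gives $H_1$ of rank at least $s-r$, so $s-r\leq 1$. Hence $\defy(G)\leq 0$.

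To show $G'=\pi_1(\Sigma_k(K))$ is not free, we use that $\Sigma_k(K)$ is a closed orientable $3$-manifold, irreducible by the equivariant sphere theorem since $K$ is prime (as in the proof of Theorem~\ref{thm:twist_spun}). There are two cases.

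\emph{$G'$ finite.} Then $G'$ is free only if it is trivial. In that case $G\cong\Z$, which is excluded: Zeeman's theorem identifies $G(K^k)$ with $\Z$ only for trivial $K$, but the trivial case would give $\Sigma_k(K)=S^3$ with $G$ free of rank one. We should either assume $K$ is nontrivial (implicit in ``prime'') or note that then $G'=1$ is free and the statement requires $K$ nontrivial. I will assume $K$ is nontrivial. Then $\pi_1(\Sigma_k(K))$ is nontrivial for $|k|\ge2$: the Smith conjecture, via the resolved Poincaré conjecture, rules out $\Sigma_k(K)\cong S^3$.

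\emph{$G'$ infinite.} As in the proof of Proposition~\ref{prop:Q}, $\Sigma_k(K)$ is aspherical, so $\cd(G')=3$ because $H^3(\Sigma_k(K);\Z)\cong\Z\neq 0$ for a closed orientable $3$-manifold. A free group has $\cd\leq 1$, so $G'$ is not free.

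The main obstacle is the nontriviality of $\pi_1(\Sigma_k(K))$ in the finite case, which needs the Smith conjecture together with the Poincaré conjecture. The rest is bookkeeping with Lemma~\ref{lem:Hil13}.
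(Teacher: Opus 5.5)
Your proposal is correct and has the same skeleton as the paper's proof: reduce via Lemma~\ref{lem:Hil13} to showing that $[G,G]=\pi_1(\Sigma_k(K))$ is not free, then derive a contradiction from $\Sigma_k(K)$ being irreducible and not $S^3$.

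The difference is how you get non-freeness. The paper cites Hempel's theorem that a closed $3$-manifold with free fundamental group is $S^3$ or a connected sum of copies of $S^1\times S^2$. That contradicts irreducibility and $\Sigma_k(K)\not\cong S^3$ in one stroke. You instead split into cases:
\begin{enumerate}
\item a finite free group is trivial, which Poincar\'{e} plus Smith rule out;
\item an infinite one makes the irreducible manifold aspherical (recycling the argument of Proposition~\ref{prop:Q}), so $\cd(G')=3>1$.
\end{enumerate}
Your route avoids the structure theorem for $3$-manifolds with free $\pi_1$ at the cost of a case split; the citation handles both cases uniformly. Both routes need the Smith conjecture; the paper hides it in ``not homeomorphic to $S^3$''.

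You also make explicit some steps the paper omits: the hypotheses of Lemma~\ref{lem:Hil13}, the inequality $\mathrm{gd}(G)\geq\cd(G)$, and $\defy(G)\leq 1$, so that $\defy(G)\neq 1$ gives $\defy(G)\leq 0$. The last step uses the standard convention that deficiency is a maximum; the ``minimum'' in the paper's definition is a slip.

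One small correction: your sentence crediting Zeeman with the fact that $G(K^k)\cong\Z$ only for trivial $K$ is a misattribution. Zeeman shows that $K^{\pm 1}$ is trivial. For $|k|\geq 2$ that fact is exactly your Smith--Poincar\'{e} argument, so the sentence can simply be dropped.
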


\begin{proof}
By Lemma~\ref{lem:Hil13}, it suffices to show that $[G, G]$ is not a free group.
Suppose, to the contrary, that $[G, G]$ is a free group.
Then, $\Sigma_k(K)$ is homeomorphic to $S^3$ or a connected sum of several $S^1\times S^2$ by \cite[Theorem~5.2]{Hem04}.
On the other hand, since $K$ is prime, $\Sigma_k(K)$ is irreducible and not homeomorphic to $S^3$.
This is a contradiction.
\end{proof}

\end{document}